\def\b{\beta}
\def\bb{\mathbb}
\def\cal{\mathcal}
\def\rm{\textrm}
\def\bf{\textbf}
\def\F{\mathrm{Frat}}
\def\soc{\mathrm{soc}}
\def\aut{\mathrm{Aut}}
\DeclareMathOperator{\frat}{Frat}
\def\pn{\par\noindent}
\begin{document}


\title[Rational probabilistic zeta function]{A finiteness condition on the coefficients\\ of the probabilistic zeta function}
\author{Duong Hoang Dung and Andrea Lucchini}

\thanks{{\scriptsize
\hskip -0.4 true cm MSC(2010): Primary: 20E18; Secondary: {20D06, 20P05, 11M41.}
\newline Keywords: Probabilistic zeta function, special linear groups.\\
}}
\maketitle


\begin{abstract}  We discuss whether finiteness properties of a profinite group $G$ can be deduced from the coefficients of the probabilistic
zeta function $P_G(s)$. In particular we prove that if  $P_G(s)$  is rational and all but finitely many non abelian composition factors of $G$ are isomorphic to $PSL(2,p)$ for some prime $p$, then $G$ contains only finitely many maximal subgroups.
\end{abstract}

\newcommand\node[2]{\overset{#1}{\underset{#2}{\circ}}}
\newcommand{\DEF}[1]{{\em #1\/}}

\numberwithin{equation}{section}

\newtheorem{theorem}{\bf Theorem}[section]
\newtheorem{corollary}[theorem]{\bf Corollary}
\newtheorem{lemma}[theorem]{\bf Lemma}
\newtheorem{example}[theorem]{\bf Example}
\newtheorem{proposition}[theorem]{\bf Proposition}
\newtheorem{conjecture}[theorem]{\bf Conjecture}
\newtheorem{problem}[theorem]{\bf Problem}
\newtheorem{remark}[theorem]{\bf Remark}
\newtheorem{definition}[theorem]{\bf Definition}
\newtheorem*{Main1}{Theorem 1}
\newtheorem*{Main2}{Theorem 2}
\newtheorem*{Main3}{Corollary}
\newtheorem{claim}{\bf Claim}
\newtheorem*{my2}{Theorem \ref{my2}}
\newtheorem*{my3}{Theorem \ref{my3}}

\newcommand{\Prob}{\operatorname{Prob}}

\
\section{Introduction}

Let $G$ be a finitely generated profinite group. As $G$ has only
finitely many open  subgroups of a given index, for any $n \in
\mathbb N$ we may define the integer $a_n(G)$ as $a_n(G)=\sum_H
\mu_G(H),$ where the sum is over all open subgroups $H$ of $G$
with $|G:H|=n.$ Here $\mu_G(H)$ denotes the M\"obius function of
the poset of open subgroups of $G,$ which is defined by recursion
as follows: $\mu_G(G)=1$ and $\mu_G(H)=-\sum_{H<K}\mu_G(K)$ if
$H<G$.
 Then
we  associate to $G$ a formal Dirichlet series $P_G(s)$,  defined as
\[P_G(s)=\sum_{n \in \mathbb N}\frac{a_n(G)}{n^s}.\]
Notice that if $H$ is an open subgroup of $G$ and $\mu_G(H)\neq 0,$ then $H$ is an intersection of maximal subgroups of $G.$ Therefore the formal Dirichlet series $P_G(s)$ encodes information about
the lattice generated by the maximal subgroups of $G$, just as the Riemann zeta
function encodes information about the primes, and combinatorial properties of the probabilistic sequence $\{a_{n}(G)\}$ reflect
on the structure of $G.$

If $G$ contains only finitely many maximal subgroups (i.e. if the Frattini subgroup
$\frat G$ of $G$ has finite index in $G$), then there are only finitely many open subgroups $H$ of $G$ with $\mu_G(H)\neq 0$ and consequently $a_n(G)=0$ for all but finitely many $n\in \mathbb N$ (i.e. $P_G(s)$ is a finite Dirichlet series). A natural question is whether the converse is true.

Let $\{G_n\}_{n \in \mathbb N}$ be a countable
 descending series  of open normal
subgroups with the properties that $G_1=G,$ $\bigcap_{n \in
\mathbb N}G_n=1$ and $G_n/G_{n+1}$ is a chief factor of $G$ for
each $n \in \mathbb N.$ The factor group $G/G_n$ is finite, so the
Dirichlet  series $P_{G/G_n}(s)$ is also finite and belongs to the
ring $\cal D$ of Dirichlet polynomials with integer coefficients.
Actually,  $P_{G/G_{n}}(s)$ is a divisor of
$P_{G/G_{n+1}}(s)$ in the ring $\cal D$, i.e. there exists a
Dirichlet polynomial $P_n(s)$ such that
$P_{G/G_{n+1}}(s)=P_{G/G_{n}}(s)P_{n}(s)$. As explained in
\cite{DeLu06b}, the Dirichlet series $P_G(s)$ can be written
as an infinite formal product
\[P_G(s)=\prod_{n \in \mathbb
N}P_n(s),\]
 and if we change  the series $\{G_n\}_{n \in \mathbb
N}$, the factorization remains the same up to reordering the
factors. Moreover it turns out that $P_n(s)=1$ if
$G_n/G_{n+1}$ is a Frattini chief factor (i.e. $G_n/G_{n+1}\leq \frat(G/G_{n+1}$).
Notice that $G$ has finitely many open maximal subgroups if and only if the chief
series $\{G_n\}_{n \in \mathbb N}$ contains only finitely many non-Frattini factors.
This could suggest a wrong argument: if the product $P_G(s)=\prod_{n \in \mathbb
N}P_n(s)$ is a Dirichlet polynomial, then $P_n(s)=1$ for all but finitely many $n\in \mathbb N$
and consequently the series $\{G_n\}_{n \in \mathbb N}$ contains only finitely many non-Frattini factors.
The problem is that it is possible that a Dirichlet polynomial can be written as a
formal product of infinitely many non trivial elements of $\cal D$. To give an idea of what can occur,
let us recall a related question, with an unexpected solution: if $G$ is prosolvable, then
we can consider the $p$-local factor $$P_{G,p}(s)=\sum_{m\in \mathbb N}\frac{a_{p^m}}{p^{ms}}.$$
It turns out that $P_{G,p}(s)=\prod_{n \in \Omega_p}P_n(s)$ where $\Omega_p$ is the set
of indices  $n$ such that $G_n/G_{n+1}$ has $p$-power order. It is not difficult to prove that
a finitely generated prosolvable group $G$ contains only finitely many maximal subgroups whose index is a power of $p$
if and only if a chief series of $G$ contains only finitely many non-Frattini factors whose order is a $p$-power.
Therefore the previous tempting wrong argument would suggest the following conjecture: if the $p$-factor $P_{G,p}(s)$ is a
Dirichlet polynomial, then $G$ has only finitely many maximal subgroups of $p$-power index. However this is false; in \cite{DeLu06}
it is constructed a 2-generated  prosolvable group $G$ such that, for any prime $p$,
$G$ contains infinitely many maximal subgroups of $p$-power index, while $P_{G,p}(s)$ is a finite Dirichlet series.
Knowing that $P_{G,p}(s)$ can be a polynomial even when $\Omega_p$
is infinite, could lead to believe in the existence of a
counterexample to the conjecture that $P_G(s) \in \cal D$ implies
$G/\frat ( G)$ finite. However, using results from number theory,
in \cite{DeLu06} it was prove that if $G$ is prosolvable and $P_{G,p}(s)$ is a polynomial, then either $\Omega_p$
is finite or, for every prime $q$, there exists $n \in \Omega_p$
such that the dimension of $G_n/G_{n+1}$ as 
$\mathbb F_pG$-module is divisible by $q;$ using standard
arguments of modular representation theory one deduce that this is
possible only if infinitely many primes appears between the
divisors of the order of the finite images of $G$; but then
$P_{G,r}(s)\neq 1$ for infinitely many primes $r$ and $P_G(s)$
cannot be  a polynomial. So $P_G(s)$ can be a polynomial only if
$\Omega_p$ is finite for every prime and empty for all but
finitely many, and therefore only if $G/\frat (G)$ is finite.
Really a stronger result holds: if $G$ is a finitely generated prosolvable group, then $P_G(s)$ is rational (i.e. $P_G(s)=A(s)/B(s)$ with $A(s)$ and $B(s)$ finite Dirichlet series) if and only if $G/\frat G$ is a finite group.
Partial generalization has been obtained in \cite{DeLu07}
and in \cite{dung}. All these results can be summarized in the following statement:
\begin{theorem}Let $G$ be a finitely generated profinite group. Assume that there exist a prime $p$ and a normal open subgroup $N$ of $G$ such
that the set $\cal S$ of nonabelian composition factors of $N$ satisfies one of the following properties:
\begin{itemize}
\item all the groups in $\cal S$ are alternating groups;
\item all the groups in $\cal S$ are of Lie type over fields of characteristic $p,$
where $p$ is a fixed prime;
\item all the groups in $\cal S$ are sporadic simple groups.
\end{itemize}
Then $P_G(s)$ is rational if and only if $G/\F(G)$ is a finite group.
\end{theorem}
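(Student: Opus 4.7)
The easy direction is immediate: if $G/\frat(G)$ is finite then only finitely many open $H<G$ have $\mu_G(H)\neq 0$, so $P_G(s)$ is a finite Dirichlet polynomial and \emph{a fortiori} rational. For the converse, the plan is to fix a chief series $\{G_n\}$ and prove that $P_n(s)=1$ for all but finitely many $n$, which by the discussion preceding the statement is equivalent to $G/\frat(G)$ being finite.

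Split the index set $\Omega$ of the factorization $P_G(s)=\prod_n P_n(s)$ as $\Omega_{\mathrm{ab}}\sqcup \Omega_{\mathrm{nonab}}$, according to whether the chief factor $G_n/G_{n+1}$ is abelian or not. After replacing $G$ by a suitable normal open subgroup inside $N$ (losing only finitely many initial factors), every non-Frattini non-abelian chief factor is a direct product of copies of a single simple group lying in $\mathcal{S}$. The first main step is to show that, under any of the three hypotheses on $\mathcal{S}$, only finitely many indices in $\Omega_{\mathrm{nonab}}$ can contribute a non-trivial $P_n(s)$. This is where classification-type input enters: in each case the order formulas for the simple groups, together with the structure of their maximal subgroups, force the set of primes dividing those $m$ with $a_m(G)\neq 0$ to be unbounded unless $\Omega_{\mathrm{nonab}}$ is already finite; unbounded prime support contradicts rationality of $P_G(s)$, since a rational Dirichlet series has denominators supported on only finitely many primes.

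Once the non-abelian chief factors are exhausted, one is left with a prosolvable tail, and it remains to show that $\Omega_{\mathrm{ab}}$ also contributes only finitely many nontrivial $P_n(s)$. At this point the plan is to invoke the prosolvable case proved in \cite{DeLu06}, which establishes exactly the rational-implies-finite-Frattini-quotient statement in that setting. The argument there proceeds local factor by local factor: rationality forces $\Omega_p$ finite for every $p$ and empty for all but finitely many $p$, because infinitely many abelian non-Frattini chief factors of $p$-power order would, via modular representation theory of the finite images of $G$, introduce unboundedly many prime divisors into the coefficients of $P_G(s)$, again contradicting rationality.

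The main obstacle is the first step: extracting from the rationality of $P_G(s)$ a usable arithmetic constraint on the primes dividing $|G:H|$ for open $H$ with $\mu_G(H)\neq 0$, and matching it with the order arithmetic of the groups in $\mathcal{S}$. In the alternating case this is driven by the density of primes up to $n$ dividing $|\mathrm{Alt}(n)|$; in the Lie-type-in-characteristic-$p$ case by expressing indices of maximal parabolic and other subgroups as polynomials in the field size and controlling their prime divisors; in the sporadic case by the finiteness of $\mathcal{S}$ itself, which reduces the problem to controlling multiplicities and diagonal actions of $G$ on iterated direct products of finitely many fixed simple groups. Organising these three technically distinct arguments under the single umbrella statement is the real content of the theorem.
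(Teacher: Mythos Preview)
This theorem is not proved in the present paper; it is quoted as a summary of earlier work in \cite{DeLu06}, \cite{DeLu07} and \cite{dung}. However, the paper does describe the method of those proofs, and your plan diverges from it at the crucial point.

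Your outline correctly identifies the easy direction and the first reduction: rationality of $P_G(s)$ forces $\pi(P_G(s))$ to be finite, and this, combined with the order arithmetic of the groups in $\mathcal S$, bounds the set $\mathcal S$ of simple groups that can occur (hence $\pi(G)$ is finite). But that is only the preliminary step. The real difficulty, which your plan does not address, is that once $\pi(G)$ is finite one may still have \emph{infinitely many} non-Frattini chief factors, all built from the same finite collection of simple groups. The paper emphasises this explicitly: there exist $2$-generated prosolvable groups with infinitely many maximal subgroups of $p$-power index and yet $P_{G,p}(s)$ a polynomial; so ``finitely many primes in the support'' cannot by itself force $\Omega_{\mathrm{nonab}}$ (or $\Omega_{\mathrm{ab}}$) to be finite. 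The missing ingredient is the Skolem--Mahler--Lech theorem, packaged as Proposition~\ref{skolem remark}: one must extract from each $P_i(s)$ an ``approximation'' $P_i^*(s)=1-c_i/(q^{r_i})^s$ with $c_i>0$ whenever $P_i(s)\neq 1$, arrange (via a carefully chosen ordering on the primes, depending on which of the three hypotheses on $\mathcal S$ holds, together with Proposition~\ref{prop 4.3}) that $\prod_i P_i^*(s)$ is still rational, and then invoke Skolem--Mahler--Lech to conclude that only finitely many $c_i$ are nonzero. Your sketch contains no trace of this mechanism.

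A second, smaller gap: your step~4 proposes to peel off the non-abelian factors and then ``invoke the prosolvable case'' on the remainder. But $G$ is not prosolvable, and the abelian and non-abelian chief factors are interleaved; one cannot simply pass to a prosolvable quotient and apply \cite{DeLu06}. In the actual proofs the abelian factors are handled simultaneously with the non-abelian ones by the same Skolem--Mahler--Lech machinery (indeed, for an abelian factor $P_i(s)$ already has the form $1-c_i/(q_i^{r_i})^s$).
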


The main ingredient in the proof of the previous results is the following result, proved with the help of
the Skolem-Mahler-Lech Theorem and where $\pi(G)$ is the set of the primes $q$ with the properties that $G$ contains at least an open subgroup $H$ whose index is divisible by $q$.
\begin{proposition}\label{skolem remark}
Let $G$ be a finitely generated profinite group, assume that $\pi(G)$ is finite and let $r_i$ be the sequence of the composition lengths
of the non-Frattini factors in a chief series of $G$. Assume that there exists a positive integer $q$ and a sequence ${c_i}$ of nonnegative integers
such that the formal product
$$\prod_{i}\left(1-\frac{c_i}{(q^{r_i})^s}\right)$$
is rational. Then $c_i=0$ for all but finitely many indices $i.$
\end{proposition}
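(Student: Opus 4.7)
The plan is to recast the statement as one about a rational power series in a single variable and then to apply the Skolem--Mahler--Lech theorem, most conveniently through the logarithmic derivative. First, setting $t = q^{-s}$ reinterprets the formal Dirichlet product as a formal power series
\[
f(t) \;=\; \prod_i \bigl(1 - c_i\, t^{r_i}\bigr) \;\in\; \mathbb{Z}[[t]],
\]
and the hypothesis that $\pi(G)$ is finite lets us conclude from rationality of the Dirichlet series that $f(t) \in \mathbb{Q}(t)$. For the formal product to make sense, one must have $r_i \to \infty$ along any infinite subsequence of indices with $c_i \neq 0$.

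I argue by contradiction: suppose $c_i \neq 0$ for infinitely many indices. Then the logarithmic derivative
\[
\frac{f'(t)}{f(t)} \;=\; -\sum_i \frac{r_i c_i\, t^{r_i - 1}}{1 - c_i t^{r_i}}
\]
is rational, so its Taylor coefficients
\[
A_n \;=\; \sum_{i \,:\, r_i \mid n} r_i\, c_i^{n/r_i}
\]
form a linear recurrence sequence over $\mathbb{Q}$. By Skolem--Mahler--Lech, the set $\{n : A_n = 0\}$ is a finite union of arithmetic progressions together with a finite set; equivalently, $A_n$ has the form $\sum_j P_j(n)\,\alpha_j^{\,n}$ for finitely many algebraic $\alpha_j$ and polynomials $P_j$. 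On the other hand, a direct partial-fraction computation shows that each nonzero summand in the expression for $f'/f$ contributes simple poles with residue $+1$ at the $r_i$-th roots of $1/c_i$; since all residues are positive, the sum can have only finitely many poles if and only if only finitely many distinct pole locations arise across the summands.

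The remaining step is a case analysis on the sizes of the $c_i$, where Skolem--Mahler--Lech is used to exclude accidental coincidences of poles. If $c_i \geq 2$ for infinitely many $i$, one argues that the pole locations $c_i^{-1/r_i} \zeta^{-1}$ can reduce to a finite set only when the pairs $(c_i, r_i)$ satisfy $c_i = \rho^{-r_i}$ for a fixed $\rho < 1$; the substitution $u = \rho^{-1} t$ then reduces the problem to the case $c_i = 1$. If $c_i = 1$ for infinitely many $i$, the poles include all $r_i$-th roots of unity for arbitrarily large $r_i$, giving infinitely many distinct poles and the required contradiction. The main obstacle lies precisely in this case analysis: although the positivity of residues immediately rules out cancellation among coincident poles, showing that infinitely many distinct poles must arise requires carefully combining the Skolem--Mahler--Lech restrictions on $(A_n)$ with the arithmetic constraints that each $c_i$ is a positive integer and $r_i \to \infty$ along the relevant subsequence.
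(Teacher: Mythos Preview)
Your outline has the right flavor but contains a genuine gap, and it diverges from the paper's argument in an important way.

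The paper's proof is a one-line combination of two cited results. Proposition~\ref{two conditions fulfilled} uses the hypothesis that $\pi(G)$ is finite to establish two facts about the sequence $(r_i)$: (i) for each $n$ only finitely many $r_i$ are at most $n$, and (ii) there is a prime $t$ dividing no $r_i$. These are precisely the hypotheses of Proposition~\ref{skolem theorem} (the Skolem--Mahler--Lech consequence quoted from \cite{DeLu06}), which then yields the conclusion immediately. You never invoke (ii); and your stated use of ``$\pi(G)$ finite'' --- to pass from rationality of the Dirichlet series to rationality of $f(t)$ --- is misplaced, since that passage already follows from the ring endomorphism of Remark~\ref{pirat} with $\pi$ the set of primes different from $q$, and needs nothing about $G$.

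The substantive gap is in the residue argument. The identity $f'/f=\sum_i g_i'/g_i$, with $g_i(t)=1-c_i t^{r_i}$, holds only as an equality of formal power series; it does not follow that the zeros of each $g_i$ are poles of the rational function $f'/f$, because the infinite sum need not converge analytically near those points, and so ``positivity of residues rules out cancellation'' has no force here. For instance, when $c_i=1$ for infinitely many $i$ the putative poles are roots of unity of unbounded order, yet nothing in your sketch justifies the conclusion that the rational function $f'/f$ must then have infinitely many poles. Your case analysis does not repair this --- as you yourself acknowledge --- and in the published route it is exactly condition (ii), supplied by Proposition~\ref{two conditions fulfilled}, that makes the Skolem--Mahler--Lech step of Proposition~\ref{skolem theorem} go through. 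Without it, or a genuine substitute, the argument is incomplete.
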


In our applications, ${c_i}/{(q^{r_i})^s}$ is one of the summands of the Dirichlet polynomial $P_i(s)$ associated to the chief factor
$G_i/G_{i+1}$ and must be chosen so that the polynomials $P_i^*(s)=1-{c_i}/{(q^{r_i})^s}$ satisfy two conditions:
\begin{enumerate}
\item if $P_i(s)\neq 1$ for infinitely many $i \in \mathbb N,$ then also $P_i^*(s)\neq 1$ for infinitely many $i \in \mathbb N;$
\item if the infinite product $\prod_{i}P_i(s)$ is rational, then also $\prod_{i}P_i^*(s)$ is rational.
\end{enumerate}

The choice of the \lq\lq approximation\rq\rq \ $P_i^*(s)$ of $P_i(s)$ is not easy and we are not able to use this strategy in the general case;
roughly speaking, it requires an order on the set of the primes numbers with the property that
\lq\lq small\rq\rq \ simple groups in $\cal S$
have order not divisible by \lq\lq large\rq\rq \ primes.
When $\cal S$ is the set of the alternating groups, we just consider the natural order, but when $\cal S$ consists
of simple groups of Lie type in characteristic $p$, we say that a prime $q_1$ is smaller than a prime $q_2$ if
the multiplicative order of $p$ mod $q_1$ is smaller than its order modulo $q_2.$ This order depends on the choice of $p$
 and our arguments do not work if $\cal S$ contains groups of different kinds, for example groups of Lie
type in different characteristics. However we expect that our statement remains true in the general case. In the present
paper we prove a new result in this direction:

\begin{theorem}\label{uno}
Let $G$ be a finitely generated profinite group. Assume that there exists a normal open subgroup $N$ of $G$ such
that any nonabelian composition factor of $N$ is isomorphic to $PSL(2,p)$ for some prime $p.$ Then $P_G(s)$ is rational if and only if $G/\F(G)$ is a finite group.
\end{theorem}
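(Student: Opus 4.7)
I would argue by contradiction, assuming $P_G(s)$ is rational while $G/\F(G)$ is infinite. Then the chief series $\{G_n\}_{n\in\mathbb N}$ contains infinitely many non-Frattini factors, and in the formal factorization $P_G(s)=\prod_n P_n(s)$ infinitely many factors $P_n(s)$ are nontrivial.

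The first step is to show that only finitely many primes $p$ can arise as composition factors of $N$ through $PSL(2,p)$. The key observation is that if $P_G(s)=A(s)/B(s)$ with $A(s),B(s)\in\cal D$, then the set of indices $n$ with $a_n(G)\neq 0$ is contained in the product of the (finite) set of indices appearing in $A(s)$ with the multiplicative monoid generated by the (finite) set of indices appearing in $B(s)$; in particular, only finitely many primes divide any such $n$. On the other hand, each non-Frattini chief factor of composition type $PSL(2,p)^t$ forces maximal subgroups of suitable finite quotients of $G$ whose indices are divisible by $p+1$ (namely, those coming from Borel subgroups, which are the maximal subgroups of smallest index in $PSL(2,p)$). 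If $PSL(2,p)$ were to occur for infinitely many distinct primes $p$, this would produce an infinite set of primes dividing indices $n$ with $a_n(G)\neq 0$, contradicting rationality. Hence only finitely many primes $p_1,\dots,p_k$ support $PSL(2,p)$-type composition factors of $N$.

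Once this is established, the non-Frattini chief factors of $G$ split into finitely many isomorphism types: the nonabelian ones are powers of $PSL(2,p_1),\dots,PSL(2,p_k)$, and the abelian ones are elementary abelian $p$-groups for $p$ in the (necessarily finite) set $\pi(G)$. By the pigeonhole principle a single type accounts for infinitely many non-Frattini chief factors. I would then mimic the strategy already used in the fixed-characteristic Lie type case of the theorem quoted before Proposition \ref{skolem remark}: for that type, choose an appropriate integer $q$ (for instance $q=p_j$ for a nonabelian $PSL(2,p_j)$ type, or $q=p$ for an abelian type of characteristic $p$) and extract from each $P_n(s)$ in the chosen infinite class a dominant approximation of the form $P_n^*(s)=1-c_n/(q^{r_n})^s$, where $r_n$ is the composition length of the chief factor. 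After verifying the two conditions recalled after Proposition \ref{skolem remark}---so that rationality of $P_G(s)$ transfers to rationality of $\prod_n P_n^*(s)$ while infinitely many $c_n$ remain nonzero---Proposition \ref{skolem remark} yields the desired contradiction.

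The main obstacle is carrying out this last step uniformly: the standard device of ordering primes so that small groups in the family have orders not divisible by large primes must be adapted to handle $PSL(2,p)$ for varying $p$ together with the abelian chief factors of several fixed characteristics. The most delicate point is verifying that the Borel-subgroup contribution to $P_n(s)$ survives as the dominant term $c_n/(p+1)^{r_n s}$ after embedding the chief factor into the relevant finite quotient of $G$, and that this choice of approximation satisfies conditions (1) and (2) simultaneously for the abelian and nonabelian types that share a prime.
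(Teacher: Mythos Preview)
Your overall architecture matches the paper's: first bound the set $\cal S$ of simple composition factors, then feed an infinite family of non-Frattini chief factors into Proposition~\ref{skolem remark}. The implementation, however, is built on the Borel index $p+1$, and this is the wrong anchor at both stages.

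For the first step, you claim that infinitely many distinct $p$ with $PSL(2,p)\in\cal S$ would force infinitely many primes into $\pi(P_G(s))$ via indices divisible by $p+1$. Two things go wrong. The existence of a maximal subgroup of a given index in some finite quotient does not by itself produce a nonzero $a_n(G)$: M\"obius cancellation between the different factors $P_i(s)$ is exactly what has to be excluded, and you give no mechanism for this. More seriously, the prime divisors of the integers $p+1$ need not be unbounded as $p$ ranges over an infinite set of primes; nothing you have said rules out, for instance, an infinite sequence of primes $p$ with $p+1$ a power of $2$, in which case the Borel indices contribute only the prime $2$. The paper's Lemma~\ref{gamma finite} avoids both issues by discarding even indices altogether. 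Every subgroup of $X_i\in\{PSL(2,q),PGL(2,q)\}$ of \emph{odd} index has index divisible by the characteristic $q$ itself; one then chooses $q\notin\pi(A(s))$, locates the \emph{minimal} odd index $\beta=w^r$ using Lemma~\ref{minimalodd}, and observes that at this minimal index only maximal subgroups contribute, each with M\"obius value $-1$, so $c_\beta<0$ and $q\in\pi(A(s))$ after all.

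The same defect undermines your second step. Transferring rationality from $\prod_i P_i(s)$ to $\prod_i P_i^*(s)$ goes through Proposition~\ref{prop 4.3}, whose hypothesis requires a \emph{prime} $q$ with $v_q(n)=r_i$ for every relevant $n$. The Borel index $p+1$ is even and composite, and no prime divisor of $p+1$ has controlled valuation across the remaining subgroup indices of $X_i$, so your approximation $1-c_n/(p+1)^{r_ns}$ does not fit that proposition. The paper instead takes $q=\max\{q_i\}$, passes to $P_i^{\{2\}}(s)$, and uses that every odd index $m>1$ of a subgroup of $X_i$ with $\soc X_i=PSL(2,q)$ satisfies $v_q(m)=1$ (while factors with $q_i<q$ cannot contribute any odd index divisible by $q$, since $q\nmid q_i(q_i^2-1)$). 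This puts Proposition~\ref{prop 4.3} in force with $w=\min\{w(X_i)\}$, and the resulting approximations $1+b_{i,w^{r_i}}/(w^{r_i})^s$, with $b_{i,w^{r_i}}<0$ by Remark~\ref{vice}, are what Proposition~\ref{skolem remark} consumes. The missing idea, then, is to replace the Borel subgroups by the odd-index maximal subgroups and to anchor the whole argument on the characteristic prime rather than on $p+1$.
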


\section{Preliminaries and notations}\label{notation}
Let $\cal R$ be the ring of formal Dirichlet series with integer
coefficients. We say that
$F(s)=\sum _{n\in \bb{N}} {a_n}/{n^s}$ $\in \cal R$ is a Dirichlet polynomial if
$a_n=0$ for all but finitely many $n\in \bb{N}.$
The set $\cal D$ of the Dirichlet polynomials
is a subring of $\cal R.$ We will say that $F(s)\in \cal R$ is rational if there
exist $A(s), B(s)\in \cal D$ with $F(s)=A(s)/B(s).$

For every set $\pi$ of prime number, we consider the ring
endomorphism  of $\cal R$ defined by:
\begin{eqnarray*}
F(s)=\sum_{n\in \bb{N}} \frac{a_n}{n^s}  \mapsto  F^{\pi}(s)=\sum_{n\in \bb{N}} \frac{a_n^*}{n^s}
\end{eqnarray*}
where $a_n^*=0$ if $n$ is divisible by some prime $p\in \pi,$ $a_n^*=a_n$ otherwise.
We will use the following remark:
\begin{remark}\label{pirat}
For every set $\pi$ of prime numbers, if $F(s)$ is rational then $F^\pi(s)$ is rational.
\end{remark}

The following result is a consequence of the Skolem-Mahler-Lech Theorem (see \cite{DeLu06} for more details):
\begin{proposition}\label{skolem theorem}
Let $I\subseteq \bb{N}$ and let $q,r_i,c_i$ be positive integers for each $i\in I$. Assume that
\begin{itemize}
\item[(i)] for every $n\in\bb{N}$, the set $\{i\in I \mid r_i~\rm{divides}~n\}$ is finite;
\item[(ii)] there exists a prime $t$ such that $t$ does not divide $r_i$ for any $i\in I.$
\end{itemize}
If the product
$$F(s)=\prod_{i\in I}\left(1-\frac{c_i}{(q^{r_i})^s}\right)$$
is rational, then $I$ is finite.
\end{proposition}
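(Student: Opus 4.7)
The plan is to transform the statement into a question about formal power series in one variable and then deploy the Skolem--Mahler--Lech theorem.

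Substituting $x=q^{-s}$ turns the given product into the formal power series
\[
F(x)=\prod_{i\in I}\bigl(1-c_i x^{r_i}\bigr)\in\mathbb{Z}[[x]].
\]
Condition~(i) is equivalent to every fibre $I_r=\{i\in I:r_i=r\}$ being finite, which is precisely what is needed for this infinite product to be well defined as a formal series: for each $m$, only finitely many finite subsets $J\subseteq I$ satisfy $\sum_{j\in J}r_j=m$. Since $F(s)$ is supported on powers of $q$, applying Remark~\ref{pirat} with $\pi$ the set of primes different from $q$ shows that rationality of $F(s)$ as a Dirichlet series is the same as rationality of $F(x)=P(x)/Q(x)$, with $P,Q\in\mathbb{Z}[x]$ and $P(0)=Q(0)=1$.

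Next, the logarithmic derivative $L(x)=xF'(x)/F(x)$ is itself a rational function, and its coefficient sequence $\ell_m=[x^m]L(x)\in\mathbb{Z}$ therefore satisfies a linear recurrence. Expanding $L$ via the infinite product gives $\ell_m=-\sum_{i:\,r_i\mid m}r_i\,c_i^{m/r_i}$, a finite sum for every $m$ by condition~(i); viewing $\ell_m$ instead through the roots of $P$ and $Q$ yields $\ell_m=\sum_{k=1}^{N}d_k\,\eta_k^m$ for finitely many algebraic numbers $\eta_k$ and integer multiplicities $d_k$. Now I would invoke condition~(ii): for any $r^*\in\mathbb{N}$ and $n\ge 0$, the divisibility $r_i\mid r^*t^n$ collapses to $r_i\mid r^*$ since $\gcd(r_i,t)=1$. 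Setting $m=r^*t^n$ produces, for all $n\ge 0$,
\[
\sum_{i:\,r_i\mid r^*}r_i\,\bigl(c_i^{r^*/r_i}\bigr)^{t^n}=-\sum_{k=1}^{N}d_k\,\bigl(\eta_k^{r^*}\bigr)^{t^n}. \qquad(\star)
\]

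The crux of the proof is to upgrade $(\star)$ into a term-by-term equality of the two exponential sums in the variable $y=t^n$: each base $c_i^{r^*/r_i}$ occurring on the left must coincide, up to a root of unity, with some $\eta_k^{r^*}$ on the right, and the accompanying integer multiplicities must match. This is where Skolem--Mahler--Lech enters, in its $t$-adic guise: the function $y\mapsto\alpha^y$ extends to a $t$-adic analytic function on $\mathbb{Z}_t$ whenever $\alpha$ is a $t$-adic unit, and Strassmann's theorem prevents such a non-zero function from vanishing along the infinite set $\{1,t,t^2,\dots\}$. Specialising $(\star)$ to $r^*=r_{i_0}$ for each $i_0\in I$, one deduces that the positive integer $c_{i_0}$ is of the form $\eta_k^{r_{i_0}}$ (times a root of unity) for some $k\le N$. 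If $I$ were infinite, condition~(i) would supply infinitely many distinct values among the $r_{i_0}$, forcing either infinitely many distinct absolute values $|\eta_k|=c_{i_0}^{1/r_{i_0}}$ or a divergent coefficient on the left---both in contradiction with $N<\infty$, so $I$ must be finite.

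The main obstacle is precisely the step above: the sample set $\{t^n\}$ is sparser than any arithmetic progression, so the familiar formulation of Skolem--Mahler--Lech (zero set of a linear recurrence is a finite union of APs modulo a finite set) does not by itself force two exponential polynomials agreeing along $\{t^n\}$ to coincide. What actually does the work is the $t$-adic analytic argument sketched above, applied after a preliminary decomposition separating roots of unity from $t$-adic units, in the spirit of the treatment in~\cite{DeLu06}. A possibly more elementary alternative is to induct on the number of distinct values of $r_i$, peeling off the contribution of the smallest new $r^*$ at each stage and applying classical Skolem--Mahler--Lech to the resulting subsequence.
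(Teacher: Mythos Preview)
The paper does not give a self-contained proof of this proposition; it is quoted as a consequence of the Skolem--Mahler--Lech theorem with a reference to \cite{DeLu06} for the details. Your reduction is the standard one and is carried out correctly: passing to the power series $F(x)=\prod_i(1-c_ix^{r_i})$, taking the logarithmic derivative, and obtaining the identity $-\ell_m=\sum_{r_i\mid m}r_i\,c_i^{m/r_i}$ with $(\ell_m)$ a linear recurrence sequence is exactly how one sets things up. The use of condition~(ii) to freeze the set $\{i:r_i\mid m\}$ along $m=r^*t^n$ is also the right idea.

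The difficulty you flag is, however, a genuine gap and neither of your proposed fixes closes it as written. For the $t$-adic route, two points fail. First, it is not true that $y\mapsto\alpha^{y}$ extends to an analytic function on $\mathbb{Z}_t$ whenever $\alpha$ is a $t$-adic unit: one needs $\alpha$ to be a $1$-unit, and the usual remedy (restricting $y$ to a suitable residue class so that the Teichm\"uller parts become constant) is unavailable here because your sample set $\{1,t,t^{2},\dots\}$ is not contained in any arithmetic progression modulo an integer $>1$. Second, and more seriously, the prime $t$ is \emph{prescribed} by hypothesis~(ii); you cannot choose it so that the bases are $t$-adic units. Nothing prevents $t\mid c_i$, and the reciprocal roots $\eta_k$ of $P,Q$ (which are algebraic integers since $P(0)=Q(0)=1$) may well have positive $t$-adic valuation. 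In either case $\alpha^{t^{n}}\to 0$ $t$-adically and no analytic interpolation on $\mathbb{Z}_t$ is available, so Strassmann's theorem cannot be invoked. The inductive alternative (\lq\lq peel off the smallest $r^*$\rq\rq) is not fleshed out enough to evaluate: the obstacle is that for a general $m$ the set $\{i:r_i\mid m\}$ depends on $m$ through infinitely many congruence conditions, and restricting to an arithmetic progression does not isolate a single $r^*$.

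Finally, even if one grants the term-matching you assert, the concluding sentence is not convincing: from $c_{i_0}=\zeta\,\eta_k^{r_{i_0}}$ one gets $c_{i_0}^{1/r_{i_0}}=|\eta_k|$, which constrains the \emph{ratio} $c_{i_0}^{1/r_{i_0}}$ to finitely many values but does not by itself bound $r_{i_0}$ or the coefficients in $(\star)$ for a fixed $r^*$ (only finitely many $i$ occur there). A correct finish still requires an additional argument comparing coefficients across varying $r^*$. For a complete treatment you should consult \cite{DeLu06}, where the passage from the logarithmic-derivative identity to finiteness is carried out in detail.
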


\begin{proposition}{\cite[Corollary 5.2]{DeLu07}}\label{two conditions fulfilled}
Let $G$ be a finitely generated profinite group and assume that $\pi(G)$ is finite. For each $n$, there are only finitely many non-Frattini factors in a chief series whose composition length is at most $n$. Moreover there exists a prime $t$ such that no non-Frattini chief factor of $G$ has
composition length divisible by $t.$
\end{proposition}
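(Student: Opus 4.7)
The plan is to prove the two assertions separately, both relying on the combined hypotheses that $\pi(G)$ is finite and $G$ is finitely generated, which together force strong boundedness on the isomorphism types of chief factors that can occur and on the number of maximal open subgroups of small index. For the first assertion, I would start by observing that a non-Frattini chief factor $A$ of composition length at most $n$ has bounded order: abelian factors are $\mathbb{F}_p^k$ with $p \in \pi(G)$ and $k \leq n$, while non-abelian factors are $S^k$ with $S$ one of the (by CFSG, finitely many) non-abelian simple groups whose order involves only primes in $\pi(G)$, again with $k \leq n$. Thus there is some $N = N(n, \pi(G))$ with $|A| \leq N$ for every such $A$. Next, a non-Frattini chief factor $G_i/G_{i+1}$ is supplemented by a maximal subgroup $M$ of $G$ with $G_{i+1} \leq M$ and $MG_i = G$, hence $[G:M] = [G_i : M \cap G_i]$ divides $|G_i/G_{i+1}| \leq N$. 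Because $G_i$ is normal in $G$, the conditions $G_{i+1} \leq M_G$ and $G_i \not\leq M_G$ pinpoint $i$ uniquely from the core $M_G$, so distinct non-Frattini chief factors produce distinct maximal subgroup cores and hence distinct maximal subgroups. Since $G$ is finitely generated, the number of maximal open subgroups of index at most $N$ is finite, and the first assertion follows.

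For the second assertion, I would prove the stronger claim that the set $\Pi$ of primes dividing some composition length $k$ of a non-Frattini chief factor of $G$ is finite; any prime $t \notin \Pi$ then works. If $A = S^k$ is non-abelian, the conjugation action of $G$ on the $k$ simple factors yields a transitive homomorphism $G \to \mathrm{Sym}(k)$, so $k$ divides the order of a finite quotient of $G$ and every prime dividing $k$ lies in $\pi(G)$. If $A \cong \mathbb{F}_p^k$ is abelian, set $\bar G = G/C_G(A)$ and invoke Clifford--Schur theory to write $k = ef$, where $E := \mathrm{End}_{\mathbb{F}_p \bar G}(A) \cong \mathbb{F}_{p^e}$ and $A$ becomes absolutely irreducible of $E$-dimension $f$ over $\bar G$. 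By the classical theorem that the dimension of an absolutely irreducible representation of a finite group divides the group order, $f$ divides $|\bar G|$, so the primes dividing $f$ lie in $\pi(\bar G) \subseteq \pi(G)$. The integer $e$ equals the degree over $\mathbb{F}_p$ of the splitting field of the Brauer character of $A$; since this character takes values in $\mathbb{F}_p(\zeta_m)$ for $m = \exp(\bar G)_{p'}$, one has $e \mid \mathrm{ord}_m(p)$. A short cyclotomic calculation, using that $\mathrm{ord}_{q^b}(p)$ has prime divisors in $\{q\} \cup \pi(q-1)$ for each prime power $q^b \mid m$, then shows that the prime divisors of $\mathrm{ord}_m(p)$ (and hence of $e$) belong to the finite set $\pi(G) \cup \bigcup_{q \in \pi(G)} \pi(q-1)$. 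Therefore $\Pi$ sits inside this finite set and the desired $t$ exists.

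The main technical obstacle, I expect, is the abelian case of the second assertion: one must separately control the absolutely irreducible dimension $f$ via divisibility by $|\bar G|$ and the field extension degree $e$ via the cyclotomic structure of the splitting field of the Brauer character, and recombine them via $k = ef$. Once those ingredients are in place, the first assertion and the non-abelian case of the second are more direct consequences of the finite generation of $G$ and the finiteness of $\pi(G)$.
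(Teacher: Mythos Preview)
The present paper does not give a proof of this proposition at all: it is quoted as \cite[Corollary 5.2]{DeLu07} and immediately used, so there is nothing here to compare your attempt against.  What follows is therefore an assessment of your argument on its own merits.

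Your proof of the first assertion is correct, as is the non-abelian case of the second assertion.  The injection from non-Frattini chief factors of bounded composition length into maximal open subgroups of bounded index (via cores) is clean, and in the non-abelian case the transitive permutation action of $G$ on the simple direct factors of $G_i/G_{i+1}\cong S_i^{r_i}$ does force every prime divisor of $r_i$ to lie in $\pi(G)$.

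The abelian case of the second assertion, however, contains a real gap.  You write that ``by the classical theorem that the dimension of an absolutely irreducible representation of a finite group divides the group order, $f$ divides $|\bar G|$.''  This is a theorem of Frobenius in characteristic~$0$, but in characteristic $p>0$ the analogous statement---that $\varphi(1)$ divides $|G|$ for every $\varphi\in\mathrm{IBr}_p(G)$---is \emph{not} a classical theorem; it is known for $p$-solvable groups by Fong--Swan, but remains an open problem for arbitrary finite groups.  Since the quotients $\bar G=G/C_G(A)$ that occur here need not be $p$-solvable, you cannot invoke this as a black box.  Your control of the prime divisors of $e$ via the splitting field of the Brauer character is fine, but the prime divisors of $f$ require a different justification.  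One can repair the argument by exploiting more carefully that $\pi(\bar G)\subseteq\pi(G)$ is a fixed finite set (so only finitely many non-abelian simple groups can occur as composition factors of $\bar G$) together with an inductive Clifford-theoretic reduction along a chief series of $\bar G$; but this is genuinely more work than the single sentence you have written, and you should either carry it out or cite a precise reference that does.
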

\begin{proof}[Proof of Proposition \ref{skolem remark}] It follows immediately from Propositions \ref{skolem theorem} and \ref{two conditions fulfilled}.
\end{proof}
Finally let us recall the following result.
\begin{proposition}{\cite[Proposition 4.3]{DeLu07}}\label{prop 4.3}
Let $F(s)$ be a product of finite Dirichlet series:
$$F(s)=\prod_{i\in I}F_i(s),~\textrm{where}~ F_i(s)=\sum_{n\in\bb{N}}\frac{b_{i,n}}{n^s}$$
Let $q$ be a prime and $\Lambda$ the set of positive integers divisible by $q$. Assume that there exists a set $\{r_i\}_{i\in I}$  of positive integers such that if $n\in\Lambda$ and $b_{i,n}\ne 0$ then $n$ is an $r_i$-th power of some integer and $v_q(n)=r_i$ (where $v_q(n)$ is the $q$-adic valuation of $n$).  Define
$$w=\min\{x\in\bb{N}~\big|~ v_q(x)=1~\textrm{and}~b_{i,x^{r_i}}\ne 0~\textrm{for some}~i\in I\}$$
If $F(s)$ is rational, then the product
\begin{equation}
F^*(s)=\prod_{i\in I}\left(1+\frac{b_{i,w^{r_i}}}{(w^{r_i})^s}\right)
\end{equation}
is also rational.
\end{proposition}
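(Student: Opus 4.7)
The backward direction is immediate: if $G/\F(G)$ is finite, then $G$ has only finitely many open maximal subgroups, so $P_G(s)$ is a finite Dirichlet polynomial.

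For the forward direction, I assume $P_G(s)$ is rational. I fix a chief series $\{G_n\}$ of $G$, factor $P_G(s)=\prod_n P_n(s)$, and aim to show $P_n(s)=1$ for all but finitely many $n$. Since $G/N$ is finite only finitely many chief factors of $G$ lie above $N$, so we may forget these and focus on chief factors inside $N$; for those, every nonabelian composition factor is a $PSL(2,p)$ for some prime $p$. The plan is the one spelled out after Proposition \ref{skolem remark}: for each non-Frattini chief factor construct an approximation of the form $1-c_n/(q^{r_n})^s$ with $q$ a suitable integer and $r_n$ the composition length, so that Proposition \ref{skolem remark} rules out infinitely many non-trivial approximants. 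This requires two inputs: (A) $\pi(G)$ is finite and (B) a clever choice of $q$. (A) is the main obstacle.

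\emph{Handling (A).} Suppose for contradiction that $\pi(G)$ is infinite. The abelian chief factors contribute to $\pi(G)$ only as in the prosolvable case already treated in \cite{DeLu06}, so it suffices to preclude that infinitely many distinct primes $p_1<p_2<\dots$ occur as characteristics of $PSL(2,p_j)$ composition factors of $G$, each such $p_j$ being in $\pi(G)$. For $p\ge 13$ the Borel subgroup of $PSL(2,p)$ is the unique maximal subgroup of smallest index $p+1$; so the factor $P_{n_j}(s)$ attached to a chief factor of type $PSL(2,p_j)^{k_j}$ has a distinguished nontrivial term at an index of the form $d_j(p_j+1)^{k_j}$, where $d_j$ depends only on $G/G_{n_j}$ and is supported on primes strictly below $p_j+1$. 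Processing the $p_j$ in increasing order and combining Remark \ref{pirat} (to isolate denominators whose large-prime part is exactly $p_j+1$) with iterated applications of Proposition \ref{prop 4.3} (to peel off the Borel contribution at each $p_j$), I would extract from $P_G(s)$ a rational product whose shape matches the hypotheses of Proposition \ref{skolem theorem} after one further Remark \ref{pirat} restriction to a single common base, and derive a contradiction from having infinitely many nontrivial factors.

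\emph{Handling (B) and concluding.} With $\pi(G)$ finite only finitely many primes $p_1,\dots,p_t$ occur as characteristics. For each $p_i$ one has $v_{p_i}(|PSL(2,p_i)|)=1$, so the complement index of a chief factor of type $PSL(2,p_i)^{k}$ has $v_{p_i}$-valuation exactly $k$ and is a $k$-th power; after clearing via Remark \ref{pirat} the interference coming from chief factors of other characteristics $p_j$ with $p_i\mid (p_j^2-1)$, the hypotheses of Proposition \ref{prop 4.3} with $q=p_i$ are satisfied. One extracts a rational product $\prod_n\bigl(1-c_n/(p_i^{r_n})^s\bigr)$ in which $c_n\neq 0$ whenever $G_n/G_{n+1}$ is a non-Frattini chief factor of characteristic $p_i$; Proposition \ref{skolem remark} then forces only finitely many such non-Frattini factors. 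Running this for each $p_i$, and handling abelian chief factors by the prosolvable argument recalled in the introduction, one concludes that $P_n(s)=1$ for almost all $n$, i.e.\ $G/\F(G)$ is finite.
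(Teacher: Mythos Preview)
Your proposal does not address the stated proposition at all. Proposition~\ref{prop 4.3} is a purely combinatorial statement about formal Dirichlet series: given a rational product $F(s)=\prod_i F_i(s)$ and a prime $q$ satisfying the $r_i$-th power and valuation hypotheses, one must show that the truncated product $F^*(s)$ is again rational. Your write-up instead sketches a proof of Theorem~\ref{uno} (the main result about profinite groups with $PSL(2,p)$ composition factors), invoking Proposition~\ref{prop 4.3} as a tool rather than establishing it. No part of your text engages with the actual content of the proposition---there is no argument about why the minimality of $w$ allows one to isolate the coefficients $b_{i,w^{r_i}}$, nor any manipulation of the Dirichlet series $F(s)$ itself.

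Note also that the paper does not supply its own proof of Proposition~\ref{prop 4.3}: it is quoted verbatim from \cite[Proposition~4.3]{DeLu07}, so there is no in-paper argument to compare against. If your intention was to prove Theorem~\ref{uno}, you should say so; but even then your sketch diverges from the paper's route. In particular, your step~(A) tries to exploit the Borel index $p+1$, which is \emph{even} for odd $p$, whereas the paper works throughout with \emph{odd} indices (via Lemma~\ref{minimalodd} and the restriction $P_i^{\{2\}}$), precisely because the Borel contribution is not usable in the way you suggest. Your step~(B) also assumes one can cleanly separate characteristics by Remark~\ref{pirat}, but the paper instead handles all characteristics simultaneously by choosing $q=\max\{q_i\}$ and observing that any odd index divisible by $q$ forces $q_i=q$.
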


Now let $G$ be a finitely generated profinite group and let $\{G_i\}_{i\in\bb{N}}$  be a fixed countable descending series of open normal subgroups with the property that $G_0=G$, $\bigcap_{i\in\bb{N}}G_i=1$ and $G_i/G_{i+1}$ is a chief factor of $G/G_{i+1}$ for each $i\in\bb{N}$. In particular, for
each $i \in \bb{N}$, there exist a simple group $S_i$ and a positive integer $r_i$ such that $G_i/G_{i+1}\cong S_i^{r_i}.$
Moreover, as described in \cite{DeLu06b}, for each $i\in \bb{N}$ a finite
Dirichlet series
\begin{equation}\label{eq-pi}
P_i(s)=\sum_{n\in \bb{N}}\frac{b_{i,n}}{n^s}
\end{equation}
is associated with the chief factor  $G_i/G_{i+1}$ and $P_G(s)$
can be written as an infinite formal product of the finite Dirichlet series $P_i(s)$:
\begin{equation}P_G(s)=\prod_{i\in \bb{N}}P_i(s).
\end{equation}
Moreover, this factorization is independent on the choice of chief series (see \cite{DeLu03,DeLu06b}) and $P_i(s)=1$ unless $G_i/G_{i+1}$ is a non-Frattini chief factor of $G$.

We recall some properties of the series $P_i(s).$ If $S_i$ is cyclic of order $p_i,$
then $P_i(s)=1-c_i/(p_i^{r_i})^s,$
where $c_i$ is the number of complements of $G_i/G_{i+1}$ in
$G/G_{i+1}.$ It is more difficult to compute the series $P_i(s) $
when $S_i$ is a non-abelian simple group. In that case a relevant
role is played by the group $L_i=G/C_G(G_i/G_{i+1}).$ This is a
monolithic primitive group and its unique minimal normal subgroup
is isomorphic to $G_i/G_{i+1}\cong S_i^{r_i}.$ If $n \neq
|S_i|^{r_i},$ then  the coefficient $b_{i,n}$ in (\ref{eq-pi}) depends only on the
knowledge of $L_i;$ more precisely we have
\begin{equation*}
b_{i,n}=\sum_{\substack{|L_i:H|=n\\L_i=H \soc(L_i)}}\mu_{L_i}(H).
\end{equation*}
Some help in computing the coefficients $b_{i,n}$ comes from the knowledge of the subgroup $X_i$ of $\aut
S_i$ induced by the conjugation action of the normalizer in $L_i$ of
a composition factor of the socle $S_i^r$ (note that $X_i$ is an
almost simple group with socle isomorphic to $S_i).$
More precisely, given an almost simple group
$X$ with socle $S$, we can consider the following Dirichlet polynomial:
\begin{equation}\label{pxs}P_{X,S}(s)=\sum_{n}\frac{c_n(X)}{n^s},~\textrm{\ where $c_n(X)=\sum_{\substack{|X:H|=n\\X=SN}}\mu_X(H)$.}
\end{equation}
The following can be deduced from \cite{Ser08}:
\begin{lemma}\label{paz}
If $S_i$ is nonabelian and $\pi$ is a set of primes containing at least one divisor of $|S_i|$ then
$$P_i^{\pi}(s)= P^{\pi}_{X_i,S_i}(r_is-r_i+1).$$
In particular, if $n$ is not divisible by some prime in $\pi$, then there exists $m\in \mathbb N$ with $n=m^{r_i}$ and $b_{i,n}=c_m(X_i)\cdot m^{r_i-1}.$
\end{lemma}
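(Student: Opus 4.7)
The plan is to verify the identity coefficient by coefficient in the Dirichlet variable $s$. Unfolding the substitution on the right gives
\begin{equation*}
P_{X_i,S_i}(r_is-r_i+1) \;=\; \sum_{m\geq 1} \frac{c_m(X_i)}{m^{r_is-r_i+1}} \;=\; \sum_{m\geq 1} \frac{c_m(X_i)\,m^{r_i-1}}{(m^{r_i})^s},
\end{equation*}
and the $\pi$-truncation kills the terms with $m$ divisible by some prime in $\pi$. Hence proving the identity amounts to showing that for every $n$ coprime to all primes in $\pi$, the coefficient $b_{i,n}$ of $P_i(s)$ vanishes unless $n=m^{r_i}$ with $\gcd(m,\pi)=1$, in which case $b_{i,n}=c_m(X_i)\,m^{r_i-1}$; this is exactly the ``in particular'' clause of the statement, so the two assertions of the lemma are equivalent and it suffices to establish the coefficient-level version.

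Fix a prime $p\in\pi$ dividing $|S_i|$, and set $L=L_i$, $N=\soc(L)=S_i^{r_i}$, $S=S_i$, $X=X_i$, $r=r_i$. By definition,
\begin{equation*}
b_{i,n}=\sum_{\substack{|L:H|=n\\ L=HN}}\mu_L(H).
\end{equation*}
Assume $n$ is coprime to $p$. Then any $H$ contributing to the sum contains a Sylow $p$-subgroup of $L$, hence of $N$. The monolithic primitive group $L$ embeds naturally into the wreath product $X\wr\Sigma$ for some transitive $\Sigma\leq\mathrm{Sym}(r)$ permuting the $r$ simple factors of $N$. Using this embedding together with the Sylow constraint, I would invoke the structural classification of \cite{Ser08}: every such $H$ with $L=HN$ and containing a Sylow $p$-subgroup of $N$ is $L$-conjugate to a subgroup $H_K$ determined by a supplement $K\leq X$ of $S$ (i.e.\ $X=KS$), and $|L:H_K|=|X:K|^r$. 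In particular, $n=|X:K|^r$ is forced to be an $r$-th power $m^r$, which establishes the vanishing half.

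It remains to compute the Möbius contribution. Two pieces do the job. First, the lattice interval $[H_K,L]$ is order-isomorphic to $[K,X]$ via the projection of any intermediate subgroup onto a coordinate of $X$ (well-defined thanks to the transitivity of $\Sigma$); this yields $\mu_L(H_K)=\mu_X(K)$. Second, a direct count, again from \cite{Ser08}, shows that the number of $L$-conjugates of $H_K$ arising from a fixed $X$-conjugacy class of $K$ exceeds the number of $X$-conjugates of $K$ by a factor of $m^{r-1}$: the surplus records the ``twist'' freedom of choosing an offset for $K$ on each of the $r$ coordinates modulo the diagonal of $X^r$. Summing over the conjugacy classes of supplements $K$ of index $m$ then gives
\begin{equation*}
b_{i,n} \;=\; \sum_{\substack{|X:K|=m\\ X=KS}} m^{r-1}\mu_X(K) \;=\; m^{r-1}\,c_m(X),
\end{equation*}
as required. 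The main obstacle is precisely the bookkeeping in the correspondence $H\leftrightarrow K$, in particular the identification of the lattice interval $[H_K,L]\cong[K,X]$ and the $m^{r-1}$ multiplicity; both are delicate points, but they reduce to structural results in \cite{Ser08}, which is why the lemma is stated there as a deduction from that reference.
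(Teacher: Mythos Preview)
The paper does not actually prove this lemma: it is stated immediately after the sentence ``The following can be deduced from \cite{Ser08}'' and no argument is given. Your proposal is therefore not to be compared against a proof in the paper, but rather it is a (correct in outline) unpacking of what that deduction from \cite{Ser08} amounts to. Your reduction of the Dirichlet identity to the coefficient statement is clean, and the three ingredients you isolate --- the Sylow constraint forcing $H$ to correspond to a supplement $K\leq X$, the lattice isomorphism $[H_K,L]\cong[K,X]$ giving $\mu_L(H_K)=\mu_X(K)$, and the $m^{r-1}$ multiplicity from the diagonal twist --- are exactly the content of Jim\'enez Seral's analysis, so your sketch is faithful to how the lemma is obtained. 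One small point worth making explicit: the formula $b_{i,n}=\sum_{|L:H|=n,\,L=HN}\mu_L(H)$ that you start from is only asserted in the paper for $n\neq |S_i|^{r_i}$, but since you are assuming $n$ is coprime to a prime $p\mid |S_i|$ this exceptional case is automatically excluded.
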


For an almost simple group $X$, let $\Omega(X)$ be the set of the odd integers
$m\in \mathbb N$ such that
\begin{itemize}
\item $X$ contains at least one subgroup $Y$ such that $X=Y\soc X$ and $|X:Y|=m;$
\item if $X=Y\soc X $ and $|X:Y|=m,$ then $Y$ is a maximal subgroup if $X.$
\end{itemize}
Note that if $m \in \Omega(X),$ $X=Y\soc X $ and $|X:Y|=m,$
then $\mu_X(Y)=-1$: in particular
$c_m(X)<0.$ Combined with Lemma \ref{paz}, this implies:

\begin{remark}\label{vice} If $m\in \Omega(X_i),$ then $b_{i,m^{r_i}}<0.$
\end{remark}

\begin{lemma}\label{minimalodd}
Let $X$ be an almost simple group with $\soc(X) =PSL(2,p)$, where $p\geq 5$ is an odd prime
and let $w(X)=\min\{m \mid m \in \Omega (X)\}.$ Then we have:
\begin{displaymath}
w(X)=\left\{ \begin{array}{ll} q(q-1)/2 & \textrm{if $q\equiv 3\!\!\mod 4$ and $q\notin \{5,7,11,19,29\},$}\\
q(q+1)/2 & \textrm{if $q\equiv 1\!\!\mod 4$ and $q\notin \{5,7,11,19,29\},$}\\
5 & \textrm{ if $q=5,$}\\
7 & \textrm{ if $q=7$ and $X=PSL(2,7),$}\\
3\cdot 7 & \textrm{ if $q=7$ and $X=PGL(2,7),$}\\
11 & \textrm{ if $q=11$ and $X=PSL(2,11),$}\\
11\cdot 5 & \textrm{ if $q=11$ and $X=PGL(2,11),$}\\
19\cdot 3 & \textrm{ if $q=19$ and $X=PSL(2,19),$}\\
19\cdot 3^2 & \textrm{ if $q=19$ and $X=PGL(2,19),$}\\
29\cdot 7& \textrm{ if $q=29$ and $X=PSL(2,29),$}\\
29\cdot 3\cdot 5& \textrm{ if $q=29$ and $X=PGL(2,29)$.}\\
\end{array}  \right.
\end{displaymath}
\end{lemma}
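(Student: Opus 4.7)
The plan is a direct case analysis using Dickson's classification of the subgroups of $PSL(2,p)$ together with the well-known description of maximal subgroups of $PGL(2,p)$. For each almost simple $X$ with socle $S=PSL(2,p)$, I first list the conjugacy classes of maximal $M\le X$ satisfying $MS=X$, compute their indices $|X:M|$, and isolate the smallest odd value. When $X=S$ every subgroup supplements $S$, so the problem reduces to finding the smallest odd index of \emph{any} subgroup; when $X=PGL(2,p)$ I restrict to $M\not\le PSL(2,p)$.

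For $X=PSL(2,p)$ the relevant maximal subgroups are: the Borel of index $p+1$ (always even), the two dihedral subgroups of orders $p\pm 1$ with indices $p(p\mp 1)/2$, and the subgroups isomorphic to $A_4$, $S_4$, $A_5$ when they exist. A short $2$-adic valuation computation shows that $p(p-1)/2$ is odd iff $p\equiv 3\pmod 4$ and $p(p+1)/2$ is odd iff $p\equiv 1\pmod 4$, so exactly one of the two dihedral indices is odd; this is the generic value $q(q\mp 1)/2$ appearing in the lemma. The exceptional primes are precisely those for which one of $A_4,S_4,A_5$ produces an odd index strictly smaller than the dihedral one: for $A_5$ one needs $p=5$ or $p\equiv\pm 1\pmod 5$ together with $|PSL(2,p)|_2=4$, which combined with the size inequality isolates $p\in\{5,11,19,29\}$; for $S_4$ the prime $p=7$ arises with odd index~$7$.

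For $X=PGL(2,p)$ the analysis is parallel but $A_5$ is irrelevant, since whenever it exists inside $PGL(2,p)$ it sits inside $PSL(2,p)$ and so fails the supplement condition. The generic value now comes from $D_{2(p\pm 1)}$, and the exceptional $PGL$-rows for $p\in\{7,11,19,29\}$ are produced by a copy of $S_4$ not contained in $PSL(2,p)$, which exists precisely when $p\equiv\pm 3\pmod 8$. The final and most technical step is to verify the uniqueness clause in the definition of $\Omega(X)$: every $Y$ with $YS=X$ and $|X:Y|=w(X)$ must be maximal. If instead $Y<Z<X$, then $ZS=X$ too and $|X:Z|$ is an odd supplement index strictly smaller than $w(X)$, contradicting the claimed minimality; and any non-maximal candidate $Y$ of order $|X|/w(X)$ would have to appear on Dickson's list but is ruled out by direct inspection. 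The main obstacle will be the small-prime bookkeeping, where several conjugacy classes of supplements can share the critical order and each must be verified by hand.
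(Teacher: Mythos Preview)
Your proposal is correct and follows essentially the same approach as the paper, which simply observes that $\aut(PSL(2,q))=PGL(2,q)$ forces $X\in\{PSL(2,q),PGL(2,q)\}$ and then cites Dickson's list of maximal subgroups. You have supplied the case analysis that the paper omits; note that your maximality argument (if $Y<Z<X$ then $|X:Z|$ is a strictly smaller odd supplement index) already shows that the minimum odd supplement index lies in $\Omega(X)$, so the subsequent ``direct inspection'' step is redundant.
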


\begin{proof}Assume that $S\cong PSL(2,q)$ where $q\geq 5$ be an odd prime: $\aut(S)=PGL(2,q)$ and  $X=PSL(2,q)$ or $X=PGL(2,q)$. In both the cases the conclusion follows easily form the list of maximal subgroups of $X$ given in \cite{Dickson}.
\end{proof}

\section{Proof of Theorem\ref{uno}}
We start now the proof of our main result.
We assume that $G$ is a finitely generated profinite group $G$  with the properties that $P_G(s)=\sum_n a_n/n^s$ is rational.
As described in Section \ref{notation}, $P_G(s)$ can be written as a formal infinite product of Dirichlet polynomials $P_i(s)
=\sum_{n\in \bb{N}}{b_{i,n}}/{n^s}$
corresponding to the factors $G_i/G_{i+1}$ of a chief series of $G.$
Let $J$ be the set of indices $i$ such that $G_i/G_{i+1}$ is a non-Frattini chief factor.
Since $P_i(s)=1$ if $i\notin J,$ we have
$$P_G(s)=\prod_{j\in J}P_j(s).$$

For $C(s)=\sum_{n\in \mathbb N} c_n/n^s\in \cal R$, we define $\pi(C(s))$ to be the set of the primes $q$ for which there exists
at least one multiple $n$ of $q$ with $c_n\ne 0$. Notice that if $C(s)=A(s)/B(s)$ is rational then $\pi(C(s))\subseteq \pi(A(s))\bigcup \pi(B(s))$ is finite.
Let $\cal S$ be the set of the finite simple groups that are isomorphic to a composition factor of some non-Frattini chief factor of $G$. The first step in the proof of Theorem \ref{uno} is to show that $\cal S$ is finite. The proof of this claim requires the following result.
\begin{lemma}[{\cite[Lemma 3.1]{DeLu07}}]\label{number of non-iso composition factors}
Let $G$ be a finitely generated profinite group and let $q$ be a prime with $q\notin \pi(P_G(s))$.
If $q$ divides the order of a non-Frattini chief factor of $G,$ then this factor is not a $q$-group.
\end{lemma}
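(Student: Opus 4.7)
The plan is to argue by contradiction. Suppose that some non-Frattini chief factor $G_i/G_{i+1}$ of $G$ is a $q$-group; being characteristically simple, it is elementary abelian, say $G_i/G_{i+1}\cong\mathbb{F}_q^{r}$, and $P_i(s)=1-c_i/q^{rs}$ with $c_i\ge 1$. The hypothesis $q\notin\pi(P_G(s))$ reads $a_n(G)=0$ for every $n$ divisible by $q$. Applying the endomorphism $F\mapsto F^{\pi}$ with $\pi=\{p\text{ prime}:p\ne q\}$ translates this into $P_G^{\pi}(s)=1$, while the infinite factorization of $P_G(s)$ gives the identity
\[ 1\;=\;P_G^{\pi}(s)\;=\;\prod_{j}P_j^{\pi}(s), \]
a formal product of Dirichlet polynomials each with constant term $1$.

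Substituting $x=q^{-s}$, each $P_j^{\pi}(s)$ becomes a polynomial $Q_j(x)\in\mathbb{Z}[x]$ with $Q_j(0)=1$. Abelian $q$-chief factors give $Q_j(x)=1-c_j x^{r_j}$; abelian chief factors of order coprime to $q$ give $Q_j=1$; and by Lemma \ref{paz}, applicable because $\pi$ always contains a divisor of $|S_j|$, a non-abelian chief factor $S_j^{r_j}$ yields a polynomial whose $x^a$-coefficient is $c_{q^{a/r_j}}(X_j)\cdot q^{(a/r_j)(r_j-1)}$ when $r_j\mid a$ and vanishes otherwise. Let $a\ge 1$ be the smallest integer at which some $Q_j$ has a non-zero coefficient; such $a$ exists because $Q_i\ne 1$. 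In the product $\prod_j Q_j(x)=1$, any cross-term yielding $x^a$ would decompose as $x^{a_1}\cdots x^{a_k}$ with each $a_l\ge a$, so only a single non-trivial factor can contribute, and hence
\[ \sum_{j}[x^a]\,Q_j(x)\;=\;[x^a]\prod_j Q_j(x)\;=\;0. \]

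The contradiction is extracted by analysing the signs of the contributions to this sum. Abelian $q$-chief factors with $r_j=a$ contribute $-c_j\le 0$, strictly negative when the factor is non-Frattini. For a non-abelian chief factor contributing at $x^a$, the contribution has the form $c_m(X_j)\cdot m^{r_j-1}$ with $m=q^{a/r_j}$ the smallest $q$-power index of a subgroup $H\le X_j$ satisfying $X_j=H\soc(X_j)$; the aim is to apply the reasoning of Remark \ref{vice} to conclude that every such $H$ is maximal in $X_j$, so that $\mu_{X_j}(H)=-1$ and $c_m(X_j)\le 0$. Combined with a strictly negative abelian contribution — which must occur because the hypothesis guarantees $Q_i$ is non-trivial and one coordinates the choice of $a$ with the smallest non-Frattini $\mathbb{F}_q$-rank — this yields $\sum_j[x^a]Q_j(x)<0$, contradicting the identity above. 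The delicate point, and the main obstacle, is the sign claim for the non-abelian contributions: the minimality of $a$ only forces $m$ to be the smallest $q$-power index in $X_j$, not the smallest index overall, so maximality of the contributing $H$ is not automatic and may require either a refined choice of $a$ or an appeal to the restrictive classification of nonabelian simple groups admitting subgroups of prime-power index.
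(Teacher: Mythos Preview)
The paper does not prove this lemma; it is quoted from \cite[Lemma~3.1]{DeLu07} and used as a black box, so there is no in-paper argument to compare against. Your strategy is nonetheless the natural one, and---contrary to your closing paragraph---it goes through cleanly without any refined choice of $a$ and without invoking the classification of simple groups admitting subgroups of prime-power index.

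The point you flag as ``the main obstacle'' dissolves once you look at it. Suppose $j$ is a non-abelian index contributing at the minimal exponent $a$, so that $a=k r_j$ and $m=q^{k}$ is the smallest $q$-power arising as the index of some $H\le X_j$ with $H\,\soc(X_j)=X_j$ (this is exactly what the global minimality of $a$ gives, since any smaller such $q$-power index would produce a nonzero coefficient of $Q_j$ at a smaller exponent). Take any such $H$ and choose a maximal subgroup $M$ with $H\le M<X_j$. Then $M\,\soc(X_j)\supseteq H\,\soc(X_j)=X_j$, so $M$ also supplements the socle, and $|X_j:M|$ divides $q^{k}$; since $M$ is proper, $|X_j:M|=q^{k'}$ with $1\le k'\le k$, and minimality of $k$ forces $k'=k$, i.e.\ $H=M$. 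Hence every contributing $H$ is maximal, $\mu_{X_j}(H)=-1$, and $c_m(X_j)<0$. Thus every nonzero term in $\sum_j[x^a]Q_j(x)$---whether from an abelian $q$-factor (giving $-c_j<0$) or from a non-abelian factor (giving $c_{q^k}(X_j)\cdot q^{k(r_j-1)}<0$)---is strictly negative, and at least one such term exists by the definition of $a$. This contradicts $\prod_j Q_j(x)=1$, and the lemma follows.
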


\begin{lemma}\label{gamma finite}If $G$ satisfies the hypothesis of Theorem \ref{uno}, then
the sets $\cal S$ and $\pi(G)$ are finite.
\end{lemma}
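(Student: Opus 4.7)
The plan is to argue by contradiction, via the Skolem--Mahler--Lech framework of Section~\ref{notation}. Since $P_G(s)$ is rational, the set $\Pi := \pi(P_G(s))$ is finite. By Lemma~\ref{number of non-iso composition factors}, every abelian simple group $C_q$ appearing in $\mathcal{S}$ forces $q \in \Pi$, so only finitely many abelian simple groups lie in $\mathcal{S}$. Since $G/N$ is finite, only finitely many nonabelian simple groups that are not of the form $PSL(2,p)$ can belong to $\mathcal{S}$. It therefore suffices to show that the set $\mathcal{P}:=\{p\text{ prime} \mid PSL(2,p)\in \mathcal{S}\}$ is finite; once this is established, $\pi(G)$ is automatically finite, since every prime in $\pi(G)$ either divides $|S|$ for some $S\in \mathcal{S}$ (a finite set of primes) or arises from an abelian non-Frattini chief factor and hence lies in $\Pi$.

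Suppose, for contradiction, that $\mathcal{P}$ is infinite. For each $p\in\mathcal{P}$ with $p>29$, $p\notin \Pi$, and $p$ larger than the order of any nonabelian composition factor of $G/N$, fix an index $i_p$ such that $G_{i_p}/G_{i_p+1}\cong PSL(2,p)^{r_p}$. By Lemma~\ref{minimalodd}, $w(X_{i_p}) = p(p\mp 1)/2$ is odd with $v_p(w(X_{i_p}))=1$, and by Remark~\ref{vice} the coefficient $b_{i_p,\,w(X_{i_p})^{r_p}}$ of $P_{i_p}(s)$ is nonzero (in fact strictly negative). The plan is to show that this uniform non-vanishing over the infinite family $\{i_p\}_{p\in\mathcal{P}}$ is incompatible with the rationality of an appropriate restriction of $P_G(s)$.

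Concretely, I would first apply Remark~\ref{pirat} with a finite set $\pi\supseteq\{2\}$ (also containing the prime divisors of the orders of the finitely many nonabelian composition factors of $G/N$) so that $F(s):=P_G^\pi(s)$ remains rational. In the factorization $F(s)=\prod_j P_j^\pi(s)$, each surviving nonabelian factor is, by Lemma~\ref{paz}, the rescaling $P_{X_j,S_j}^\pi(r_js-r_j+1)$, whose non-vanishing coefficients are indexed by $r_j$-th powers. Next, invoke Proposition~\ref{prop 4.3} with a carefully chosen prime $q$ and $\Lambda=q\mathbb{N}$ to extract the rational product $F^{\ast}(s)=\prod_j\bigl(1+b_{j,\,w^{r_j}}/(w^{r_j})^s\bigr)$, where $w$ is the minimum integer with $v_q(w)=1$ and $b_{j,\,w^{r_j}}\neq 0$ for some $j$. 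Finally, apply Proposition~\ref{skolem theorem}, whose two hypotheses can be verified along the lines of Proposition~\ref{two conditions fulfilled} applied to the chief factors that survive the $\pi$-restriction, to conclude that $b_{j,\,w^{r_j}}=0$ for all but finitely many $j$, contradicting Remark~\ref{vice} applied to the infinitely many indices $i_p$.

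The technical crux is the simultaneous choice of $q$ and the verification of the hypothesis of Proposition~\ref{prop 4.3}: for every surviving $j$ and every $n$ divisible by $q$ with $b_{j,n}\neq 0$, one must have $n=m^{r_j}$ with $v_q(n)=r_j$. Taking $q=p$ for a specific $p\in\mathcal{P}$ handles the factor $X_{i_p}$ itself (since $v_p(|PSL(2,p)|)=1$) and every $X_{i_{p''}}$ with $p\nmid|X_{i_{p''}}|$, but the factors $X_{i_{p'}}$ with $p'\equiv\pm 1\pmod{p^2}$ can have $v_p(|X_{i_{p'}}|)\geq 2$ and could spoil the valuation condition. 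Isolating or eliminating those exceptional $p'$'s --- e.g.\ by enlarging $\pi$ beforehand or by an inductive reduction on $\mathcal{P}$ --- is the delicate point on which the whole argument turns.
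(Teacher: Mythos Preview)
Your proposal has a genuine gap, and in fact the paper's argument is considerably more elementary than the Skolem--Mahler--Lech route you sketch.

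The core difficulty in your plan is this: Proposition~\ref{prop 4.3} extracts a product $\prod_j\bigl(1+b_{j,w^{r_j}}/(w^{r_j})^s\bigr)$ for a \emph{single} integer $w$ (the minimum over all contributing $x$), and Proposition~\ref{skolem theorem} then bounds the set $\{j:b_{j,w^{r_j}}\neq 0\}$. For this to yield a contradiction you need that set to be infinite, i.e.\ you need $w\in\Omega(X_{i_p})$ for infinitely many $p\in\mathcal P$. But by Lemma~\ref{minimalodd} the numbers $w(X_{i_p})=p(p\mp1)/2$ are pairwise distinct as $p$ ranges over large primes, and a fixed odd $w$ with $q\mid w$ can belong to $\Omega(X_{i_p})$ for at most finitely many $p$. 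So the infinitude of $\mathcal P$ simply does not feed into the Skolem--Mahler--Lech machine: with one choice of $q$ you capture essentially one value of $p$, not infinitely many. (There is also a circularity issue: the hypotheses of Proposition~\ref{skolem theorem} are supplied by Proposition~\ref{two conditions fulfilled}, which already assumes $\pi(G)$ finite --- part of what you are proving.) The valuation problem you flag in your last paragraph is real, but secondary to this.

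The paper bypasses all of this with a direct coefficient computation. Write $P_G(s)=A(s)B(s)$, where $A(s)$ collects the factors $P_i(s)$ with $S_i\cong PSL(2,p)$ for some prime $p$. Since $\pi(P_G(s))$ and $\pi(B(s))$ are finite, so is $\pi(A(s))$; pick a prime $q\ge 5$ with $PSL(2,q)\in\mathcal S$ but $q\notin\pi(A(s))$. Restrict attention to odd $n$ divisible by $q$ and by no prime larger than $q$. From the subgroup structure of $X_i$ one checks that if $b_{i,n}\neq 0$ for such an $n$ then the characteristic $q_i$ of $S_i$ divides $n$ (forcing $q_i\le q$) while $q$ divides $|S_i|$ (forcing $q\le q_i$); hence $q_i=q$. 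The least such $n$ is then $\beta=w^r$, where $r=\min\{r_i:S_i=PSL(2,q)\}$ and $w=\min\{w(X_i):S_i=PSL(2,q),\ r_i=r\}$, and the coefficient of $1/\beta^s$ in $A(s)$ is the sum of the corresponding $b_{i,\beta}$, each strictly negative by Remark~\ref{vice}. Thus $q\in\pi(A(s))$, a contradiction. Finiteness of $\pi(G)$ then follows from \cite[Lemma~3.2]{DeLu07}.
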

\begin{proof}
Since $P_G(s)$ is rational, we have that $\pi(P_G(s))$ is finite. Therefore, it follows from Lemma \ref{number of non-iso composition factors} that $\cal S$ contains only finitely many abelian groups. Assume by contradiction that $\cal S$ is infinite. This is possible only if the
subset $\cal S^*$ of  the simple groups in $\cal S$ that are isomorphic to $PSL(2,p)$ for some prime $p$ is infinite.
Let $$I:=\{j\in J\mid S_j \in \cal S^*\},
\ A(s):=\prod_{i\in I}P_i(s)\ \text{ and }\ B(s):=\prod_{i\notin I}P_i(s).$$ Notice that $\pi(B(s))\subseteq \bigcup_{S\in \cal S\setminus \cal S^*}\pi(S)$ is a finite set. Since $P_G(s)=A(s)B(s)$ and $\pi(P_G(s))$ is finite, if follows that the set $\pi(A(s))$ is finite. In particular,
there exists a prime number $q\geq 5$ such that $q\notin\pi(A(s))$ but $PSL(2,q)\in \cal S^*.$
Let $\Lambda$ be the set of the odd integers $n$ divisible by $q$ but not divisible by any prime strictly greater than $q$ and set
$$
\begin{aligned}
&r:=\min\{r_i \mid S_i=PSL(2,q)\},\\&I^*:=\{i\in I \mid S_i=PSL(2,q)\text { and }r_i=r\},\\
&w:=\min\{w(X_i)\mid S_i=PSL(2,q)\text{ and } r_i=r\},\\
&\beta:=\min\{n>1 \mid n \in \Lambda, v_q(n)=r \text{ and }b_{i,n} \neq 0 \text { for some } i \in I\}.
\end{aligned}$$
Assume $i\in I$, $n\in \Lambda$ and $b_{i,n}\neq 0.$
We have that $S_i\cong PSL(2,q_i)$ for a suitable prime $q_i$ and, by Lemma \ref{paz}, $n=x_i^{r_i}$ and $X_i$ contains a subgroup whose index divides $x_i$; using the classification of the
maximal subgroups of $X_i$ in \cite{Dickson}, one can notice that $q_i$ divides the indices of all
the subgroups of $X_i$ with odd index. Hence $q_i$ divides $x_i$ and consequently $n.$ Since $q_i$ is the largest prime divisor of $|S_i|$ and $q$ is the largest prime divisor of $n$, we deduce that $q=q_i.$
It follows that $\beta=w^r$ and $b_{i,\beta}\neq 0$ if and only if $i \in I^*$ and $w(X_i)=w;$ moreover in this last case
$b_{i,\beta} < 0.$
Hence the coefficient $c_{\b}$ of $1/{\b^s}$ in $A(s)$ is
$$c_\b=\sum_{i\in I^*,w(X_i)=w}b_{i,\b}<0.$$
This implies that $q\in\pi(A(s))$ which is a contradiction.
So we have proved that $\cal S$ is finite. By \cite[Lemma 3.2]{DeLu07}, if follows that $\pi(G)$ is also finite.
\end{proof}

\begin{proof}[{Proof of Theorem \ref{uno}}]
Let $\cal T$ be the set of the almost simple groups $X$ such that
there exists infinitely many $i\in J$ with $X_i\cong X$ 
and let $I=\{i\in J\mid X_i \in \cal T\}.$ By Lemma \ref{gamma finite}, $J\setminus I$ is finite.
We have to prove that $J$ is finite; this is equivalent to show that $I=\emptyset.$
Let $i \in I:$ by the hypothesis of Theorem \ref{uno}, there exists a prime $q_i$ such that $S_i\in \{C_{q_i},PSL(2,q_i)\}.$
Set $q = \max \{q_i \mid i\in I \}$
and let $\Lambda$ be the set of odd integers $n$ divisible by $q.$
Assume  $n\in\Lambda$ and $b_{i,n}\ne 0$ for some $i \in I.$
If $S_i$ is cyclic,
then $P_i(s)=1-c_n/n^s$ where $n=|G_i/G_{i+1}|=q_i^{r_i}$ and $c_n$ is the number of complements of $G_i/G_{i+1}$ in $G/G_{i+1}:$
this implies $q=q_i.$ Otherwise $S_i=PSL(2,q_i)$ with $q_i\leq q$ and, by Lemma \ref{paz}, $n=x_i^{r_i}$ and $X_i$ contains a subgroup whose index divides $x_i$; since $x_i$ divides $|SL(2,q_i)|$, $q$ divides $x_i$ and $q_i\leq q$, we must have $q=q_i.$
In both the cases, we have $n_i=x_i^{r_i}$ where $x_i$ a positive integer with
$v_q(x_i)=1.$
Let $$w=\min \{x\in\Lambda ~| ~v_q(x)=1 \text{ and } b_{i,x^{r_i}}\neq 0 \text{ for some }i \in I\}.$$
Since $J\setminus I$ is finite and $P_G(s)=\prod_{i\in J}P_i(s)$ is rational,
also $\prod_{i\in I}P_i(s)$ is rational.
In particular, the following series is rational:
$$Q(s)=\prod_{i\in I}P^{\{2\}}_i(s).$$
Let $I^*=\{i\in I\mid b_{i,w^{r_i}}\neq 0\}$. By the above considerations and Remark \ref{vice}, $i \in I^*$ if and only if
either $S_i\cong C_q$ and $w=q$ or $\soc(X_i)=PSL(2,q)$ and $w(X_i)=w$. In particular
if $i\in I^*$ then there exist infinitely many $j \in I$ with $X_i\cong X_j$ and all of them are in $I^*,$ 
hence $I^*$ is an infinite set. Moreover $b_{i,w^{r_i}}<0$ for every
$i \in I^*,$ and therefore
applying Proposition \ref{prop 4.3}to the Dirichlet series $Q(s),$ we deduce that the product
$$ H(s)=\prod_{i\in I}\left(1+\frac{b_{i,w^{r_i}}}{w^{r_is}}\right)=\prod_{i\in I^*}\left(1+\frac{b_{i,w^{r_i}}}{w^{r_is}}\right)$$
is  rational. By  Corollary \ref{skolem remark} the set $I^*$ must be finite, a contradiction.
\end{proof}


\bibliographystyle{alpha}

\begin{thebibliography}{Man05}



\bibitem{Dickson}
Leonard~Eugene Dickson,
\emph{Linear groups: With an exposition of the Galois field
  theory}, Dover Publications Inc., New York,
  1958.

\bibitem{DeLu03}
Eloisa Detomi and Andrea Lucchini, \emph{Crowns and factorization of the
  probabilistic zeta function of a finite group}, J. Algebra \textbf{265}
  (2003), no.~2, 651--668.



\bibitem{DeLu06b}
Eloisa Detomi and Andrea Lucchini,
 \emph{Crowns in profinite groups and applications}, Noncommutative
  algebra and geometry, Lect. Notes Pure Appl. Math., vol. 243, Chapman \&
  Hall/CRC, Boca Raton, FL, 2006, pp.~47--62.

\bibitem{DeLu06}
Eloisa Detomi and Andrea Lucchini,
\emph{Profinite groups with a rational probabilistic zeta function},
J. Group Theory, 9(2) (2006) 203--217.

\bibitem{DeLu07}
Eloisa Detomi and Andrea Lucchini,
\emph{Non-prosoluble profinite groups with a rational probabilistic zeta
  function},
J. Group Theory,, 10(4) (2006) 453--466.

\bibitem{dung}
Dung H. Duong and Andrea Lucchini, \emph{Rationality of the probabilistic zeta function of finitely generated
profinite groups}, preprint.



\bibitem{Ser08}
P.~Jimenez Seral,
\emph{Coefficient of the probabilistic zeta function of a monolithic group},
Glasgow J. Math.   \textbf{50} (2008) 75--81.


\end{thebibliography}

\bigskip

{\footnotesize \pn{\bf First Author}\;
\\ {Mathematisch Instituut, Leiden Universiteit, Niels Bohrweg 1, 2333 CA Leiden, The Netherlands}
\\ {Dipartimento di Matematica, Universit\`{a} degli studi di Padova,
Via Trieste 63, 35121 Padova, Italy\\
{\tt Email: dhdung1309@gmail.com}\\

{\footnotesize \pn{\bf Second Author}\; \\ {Dipartimento di Matematica, Universit\`{a} degli studi di Padova,
Via Trieste 63, 35121 Padova, Italy\\
{\tt Email: lucchini@math.unipd.it}\\
\end{document}